\newtheorem{theorem}{Theorem}
\theoremstyle{plain}
\newtheorem{acknowledgement}{Acknowledgement}
\newtheorem{corollary}{Corollary}
\newtheorem{example}{Example}
\newtheorem{lemma}{Lemma}
\begin{document}
\title[Inverse Sturm-Liouville problem ]{Inverse nodal problems for
Sturm-Liouville equation with nonlocal boundary conditions}
\author{A. Sinan Ozkan}
\curraddr{Department of Mathematics, Faculty of Science, Sivas Cumhuriyet
University 58140 Sivas, TURKEY}
\email{sozkan@cumhuriyet.edu.tr}
\author{\.{I}brahim Adalar}
\curraddr{Zara Veysel Dursun Colleges of Applied Sciences, Sivas Cumhuriyet
University Zara/Sivas, TURKEY}
\email{iadalar@cumhuriyet.edu.tr}
\subjclass[2010]{ [2010]34A55, 34B10, 34B24}
\keywords{Inverse nodal problem; Sturm-Liouville operator; Nonlocal boundary
condition}

\begin{abstract}
In this paper, a Sturm--Liouville problem with some nonlocal boundary
conditions of the Bitsadze-Samarskii type is studied. We show that the
coefficients of the problem can be uniquely determined by a dense set of
nodal points. Moreover, we give an algorithm for the reconstruction of the
potential function and some other coefficients in the boundary conditions.
\end{abstract}

\thanks{The author thanks to the reviewers for constructive comments and
recommendations which help to improve the readability and quality of the
paper.}
\maketitle

\section{\textbf{Introduction}}

The inverse nodal problem for a Sturm-Liouville operator consists in
reconstructing the operator from zeros of its eigenfunctions, namely nodal
points. This problem was studied firstly by McLaughlin in 1988 \cite{mc1}.
She showed that the potential of a Sturm-Liouville problem with Dirichlet
boundary conditions can be determined by a given dense subset of nodal
points. Immediately after, Hald and McLaughlin give some numerical schemes
for the reconstruction of the potential \cite{H}. In 1997, X.F. Yang gave a
solution algorithm of an inverse nodal problem for the Sturm-Liouville
operator with separated boundary conditions \cite{yang}. Inverse nodal
problems for Sturm-Liouville operators with the classical boundary
conditions have been studied in\ the papers (\cite{yangx}-\cite{guo2} ).

Nonlocal boundary conditions appear when we cannot measure data directly at
the boundary. This kind conditions arise in various some applied problems of
biology, biotechnology, physics and etc. As it is known there are two kinds
of nonlocal boundary conditions. One class of them is called integral type
conditions, and the other is the Bitsadze-Samarskii-type conditions.
Bitsadze and Samarskii are considered the originators of such conditions.
Nonlocal boundary conditions of the Bitsadze-Samarskii type were first
applied to elliptic equations by them \cite{bit}. Some important results on
the properties of eigenvalues and eigenfunctions of nonlocal boudary value
problems for Sturm-Liouville type operators have been published in various
publications (see, for example, \cite{stikonas1,stikonas2} and the
references therein).

Some inverse problems for a class of Sturm-Liouville operators with nonlocal
boundary conditions are investigated in \cite{niz,niz2}. In the literature,
there are only a few studies about inverse nodal problems with nonlocal
boundary conditions. Moreover all of them include integral type conditions.
Inverse nodal problems for this-type operators with different nonlocal
integral boundary are studied in (\cite{yang3}-\cite{yan6}). Especially,
C.F. Yang et all. solved inverse-nodal Sturm-Liouville problems with
nonlocal integral-type boundary conditions at only one or both end-points
(see \cite{yang4} and \cite{yan6}).

In the present paper, we consider Sturm-Liouville problems under some the
Bitsadze-Samarskii type nonlocal boundary conditions and obtain the
uniqueness of coefficients of the problem according to a set of nodal
points. Moreover, we give an algorithm for the reconstruction of these
coefficients.

Let us consider the following boundary value problem $L=L\left( q,h,H,\gamma
_{0},\gamma _{1},\xi _{0},\xi _{1}\right) :$%
\begin{equation}
\left. \ell y:=-y^{\prime \prime }+q(x)y=\lambda y,\text{ \ \ }x\in \Omega
=(0,1)\right.  \label{1}
\end{equation}%
\begin{equation}
\left. U(y):=y^{\prime }(0)+hy(0)-\gamma _{0}y(\xi _{0})=0,\right. \medskip
\label{2}
\end{equation}%
\begin{equation}
\left. V(y):=y^{\prime }(1)+Hy(1)-\gamma _{1}y(\xi _{1})=0,\right.  \label{3}
\end{equation}%
where $q(x)$ is a real valued continuously differentiable function; $h$, $H$ 
$\in 
\mathbb{R}
\cup \left\{ \infty \right\} $ and $\gamma _{i}\neq 0$ are real numbers for $%
i=0,1$; $\xi _{i}$ are rational numbers in $(0,1)$ for $i=0,1$ and $\lambda $
is the spectral parameter.

(2) and (3) are nonlocal conditions of a Bitsadze-Samarskii type. It is
clear that if $\xi _{0}=0$ and $\xi _{1}=1$, (\ref{2}) and (\ref{3}) are not
other than the classical separated boundary conditions. On the other hand,
while $\xi _{0}=1$ and $\xi _{1}=0$ (\ref{2}) and (\ref{3}) turn into
non-separated conditions. Inverse nodal problems for this type of boundary
conditions are studied by C.F. Yang \cite{yangg}. Therefore, we focus on the
case $\xi _{i}\in (0,1)$ in our investigation. In fact, since $\xi _{0}$ and 
$\xi _{1}$ are arbitrary rational numbers, the problem we are considering
involves a fairly large class of nonlocal boundary conditions.

The main goal of this paper is to solve inverse nodal problems for (\ref{1}%
)-(\ref{3}) in each of the following cases 
\begin{eqnarray*}
&&\left. \text{i) }h,\text{ }H\in 
\mathbb{R}
,\right. \medskip \\
&&\left. \text{ii) }h=\infty ,\text{ }H\in 
\mathbb{R}
,\right. \medskip \\
&&\left. \text{iii) }h\in 
\mathbb{R}
,\text{ }H=\infty .\right.
\end{eqnarray*}%
We note that if $h=\infty ,$ $H\in 
\mathbb{R}
,$ and $h\in 
\mathbb{R}
,$ $H=\infty $ the boundary conditions can be written as 
\begin{eqnarray*}
&&\left. y(0)=0,\right. \medskip \\
&&\left. y^{\prime }(1)+Hy(1)=\gamma _{1}y(\xi _{1})\right.
\end{eqnarray*}%
and 
\begin{eqnarray*}
&&\left. y^{\prime }(0)+hy(0)=\gamma _{0}y(\xi _{0}),\right. \medskip \\
&&\left. y(1)=0,\right.
\end{eqnarray*}%
respectively.

\section{\textbf{Spectral properties of the problem}}

Let $S(x,\lambda )$ and $C(x,\lambda )$ be the solutions of (\ref{1}) under
the initial conditions 
\begin{eqnarray*}
S(0,\lambda ) &=&0\text{, }S^{\prime }(0,\lambda )=1\medskip \\
C(0,\lambda ) &=&1\text{, }C^{\prime }(0,\lambda )=0\medskip
\end{eqnarray*}%
respectively. It can be calculated that $C(x,\lambda )$ and $S(x,\lambda )$
satisfy the following asymptotic relations for $\left\vert \lambda
\right\vert \rightarrow \infty $ (see \cite{yurko} and \cite{yang4})

\begin{equation}
C(x,\lambda )=\cos kx+\frac{\sin kx}{k}Q(x)+\frac{\cos kx}{k^{2}}%
q_{1}(x)+O\left( \dfrac{1}{k^{3}}\exp \left\vert \tau \right\vert x\right)
,\medskip  \label{4}
\end{equation}%
\begin{equation}
S(x,\lambda )=\frac{\sin kx}{k}-\frac{\cos kx}{k^{2}}Q(x)+O\left( \dfrac{1}{%
k^{3}}\exp \left\vert \tau \right\vert x\right) ,\medskip  \label{5}
\end{equation}%
where $\sqrt{\lambda }=k$, $\tau =\left\vert \func{Im}k\right\vert $, $Q(x)=%
\frac{1}{2}\int_{0}^{x}q(t)dt$ and $q_{1}(x)=\frac{q(x)-q(0)}{4}-\frac{1}{8}%
\left( \int_{0}^{x}q(t)dt\right) ^{2}.$

The characteristic function of problem (\ref{1})-(\ref{3})%
\begin{equation}
\Delta (\lambda )=\det \left( 
\begin{array}{cc}
U(C) & U(S)\medskip \\ 
V(C) & V(S)\medskip%
\end{array}%
\right)  \label{6}
\end{equation}%
and the zeros of the function $\Delta (\lambda )$ coincide with the
eigenvalues of the problem (\ref{1})-(\ref{3}). Clearly, $\Delta (\lambda )$
is entire function and so the problem has a discrete spectrum. \ 

Let $\left\{ \lambda _{n}\right\} _{n\geq 0}$ be the set of eigenvalues and $%
\varphi (x,\lambda _{n})$ be the eigenfunction corresponding to the
eigenvalue $\lambda _{n}.$ Some asymptotic formulas of $\lambda _{n}$ and $%
\varphi (x,\lambda _{n})$ are given in the following Lemmas.

\begin{lemma}
The numbers $\left\{ \lambda _{n}\right\} _{n\geq 0}$ are real for
sufficiently large $n$ and they satisfy the following asymptotic relation
for $n\rightarrow \infty $:%
\begin{equation*}
\sqrt{\lambda _{n}}=k_{n}=k_{n}^{0}+\frac{\kappa _{n}}{n\pi }+o(\frac{1}{n})
\end{equation*}%
$\bigskip $where $k_{n}^{0}=\left\{ 
\begin{array}{cc}
n\pi , & \text{if }h,\text{ }H\in 
\mathbb{R}
,\bigskip \\ 
\left( n+\frac{1}{2}\right) \pi , & 
\begin{array}{c}
\text{\ if }h=\infty ,\text{ }H\in 
\mathbb{R}
,\text{ } \\ 
\text{or }H=\infty ,\text{ }h\in 
\mathbb{R}
,%
\end{array}%
\end{array}%
\right. \bigskip $ and \newline
$\kappa _{n}=\left\{ 
\begin{array}{c}
\left. Q(1)+H-h-(-1)^{n}\left[ \gamma _{1}\cos \left( n\pi \xi _{1}\right)
-\gamma _{0}\cos \left( n\pi (1-\xi _{0})\right) \right] ,\text{ if }h,\text{
}H\in 
\mathbb{R}
,\right. \bigskip \\ 
\left. H+Q(1)-(-1)^{n}\gamma _{1}\sin \left( \left( n+\frac{1}{2}\right) \pi
\xi _{1}\right) ,\text{ \ \ \ \ \ \ \ \ \ \ \ \ \ \ \ \ \ \ \ \ if }h=\infty
,\text{ }H\in 
\mathbb{R}
,\right. \bigskip \\ 
\left. Q(1)-h+\gamma _{0}\cos \left( \left( n+\frac{1}{2}\right) \pi \xi
_{0}\right) ,\text{ \ \ \ \ \ \ \ \ \ \ \ \ \ \ \ \ \ \ \ \ \ \ if }h\in 
\mathbb{R}
,\text{ }H=\infty \right.%
\end{array}%
\right. $
\end{lemma}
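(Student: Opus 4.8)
The plan is to read off the three large families of eigenvalues from the zeros of the characteristic function $\Delta(\lambda)$. First I would make $\Delta$ explicit in each case by feeding the initial data of $C$ and $S$ into the functionals $U$ and $V$. In case (i) this is (6) directly, with $U(C)=h-\gamma_0 C(\xi_0,\lambda)$, $U(S)=1-\gamma_0 S(\xi_0,\lambda)$, and $V(C),V(S)$ the analogous expressions at $x=1$. When $h=\infty$ the first condition degenerates to $y(0)=0$, so the eigenfunction is a multiple of $S$ and the eigencondition becomes $V(S)=0$; when $H=\infty$ the second condition degenerates to $y(1)=0$, so $\Delta$ reduces to $U(C)S(1,\lambda)-U(S)C(1,\lambda)$. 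Throughout I would regard $\Delta$ as a function of $k=\sqrt\lambda$ via $\lambda=k^2$.

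Next I would substitute the asymptotics (4)--(5), together with their $x$-derivatives obtained by termwise differentiation, and collect powers of $k$. In case (i) the dominant contribution comes from $-U(S)V(C)$ and equals $k\sin k$, giving $\Delta(\lambda)=k\sin k+g(k)+O(1/k)$ with $g$ a bounded combination of $\cos k$, $\cos k(1-\xi_0)$ and $\cos k\xi_1$; here the products $\cos k\xi_0\cos k+\sin k\xi_0\sin k$ collapse to $\cos k(1-\xi_0)$ by the angle-subtraction identity. In cases (ii) and (iii) the leading term is instead $\cos k$. The zeros of the leading terms give $k_n^0=n\pi$ and $k_n^0=\left(n+\frac12\right)\pi$ respectively.

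To justify that each large zero exists, is simple, and is real, I would apply Rouch\'{e}'s theorem in the $k$-plane on the circles $\left|k-k_n^0\right|=\frac{\pi}{2}$: on these contours the leading term dominates the remainder uniformly for large $n$, so $\Delta$ has exactly one zero inside, matching the single simple zero of the leading term. Reality for large $n$ then follows because $q$ and all the coefficients $h,H,\gamma_i,\xi_i$ are real, whence $C(x,\bar\lambda)=\overline{C(x,\lambda)}$, $S(x,\bar\lambda)=\overline{S(x,\lambda)}$ and therefore $\Delta(\bar\lambda)=\overline{\Delta(\lambda)}$; a non-real zero would force its conjugate to lie in the same disk, contradicting the simplicity just established.

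Finally, writing $k_n=k_n^0+\delta_n$ with $\delta_n\to0$ and inserting this into $\Delta(k_n)=0$, I would linearize via $\sin(n\pi+\delta_n)=(-1)^n\sin\delta_n$ and $\cos\left(\left(n+\frac12\right)\pi+\delta_n\right)=-(-1)^n\sin\delta_n$, keep the first order in $\delta_n$, and solve to get $\delta_n=\kappa_n/(n\pi)+o(1/n)$; the denominators $\left(n+\frac12\right)\pi$ that appear in cases (ii)--(iii) are replaced by $n\pi$ at the cost of an $o(1/n)$ error. Simplifying the trigonometric values at $k_n^0$, again by angle-subtraction (for instance $(-1)^n\sin\left(\left(n+\frac12\right)\pi(1-\xi_0)\right)=\cos\left(\left(n+\frac12\right)\pi\xi_0\right)$), then yields the three stated forms of $\kappa_n$. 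The main obstacle is the reality assertion, since the nonlocal conditions render the problem non-self-adjoint; the rest is careful but routine bookkeeping of the $O(1/k)$ error terms, which must be controlled uniformly in $n$ to make both the Rouch\'{e} comparison and the linearization legitimate.
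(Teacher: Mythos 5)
Your proposal follows essentially the same route as the paper's proof: make $\Delta$ explicit, substitute the asymptotics (4)--(5) to isolate the leading term $k\sin k$ (resp.\ $\cos k$ in the degenerate cases), localize and count zeros by Rouch\'{e}'s theorem, deduce reality from the conjugate symmetry of $\Delta$, and linearize at $k_n^0$ to extract $\kappa_n$ --- the paper merely phrases the last step as a Taylor expansion of the arctangent of $\tan k_n$ rather than of $\sin\delta_n$, which is immaterial. The one point you leave implicit, and which the paper does state (for small $\varepsilon$-disks), is the lower bound $\lvert\Delta(\lambda)\rvert\geq M\lvert k\rvert e^{\lvert\tau\rvert}$ off the disks; this is what confines \emph{all} sufficiently large zeros of $\Delta$ to the disks and hence makes the reality and labelling claims apply to the entire tail of the spectrum rather than only to the zeros you have located.
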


\begin{proof}
\bigskip We give the proof for the case: $h,$ $H\in 
\mathbb{R}
$; the other cases are similar. From (\ref{6}), we have that 
\begin{eqnarray*}
\Delta (\lambda ) &=&hS^{\prime }(1,\lambda )-\gamma _{0}C(\xi _{0},\lambda
)S^{\prime }(1,\lambda )+HhS(1,\lambda )-\gamma _{0}HC(\xi _{0},\lambda
)S(1,\lambda )\bigskip \\
&&-h\gamma _{1}S(\xi _{1},\lambda )+\bigskip \gamma _{1}\gamma _{0}C(\xi
_{0},\lambda )S(\xi _{1},\lambda )-C^{\prime }(1,\lambda )+\gamma
_{0}C^{\prime }(1,\lambda )S(\xi _{0},\lambda ) \\
&&-HC(1,\lambda )+H\gamma _{0}S(\xi _{0},\lambda )C(1,\lambda )+\gamma
_{1}C(\xi _{1},\lambda )-\gamma _{1}\gamma _{0}S(\xi _{0},\lambda )C(\xi
_{1},\lambda ).\bigskip
\end{eqnarray*}%
Using (\ref{4}) and (\ref{5}), we obtain the following asymptotic formula
for $\Delta (\lambda )$ as $k\rightarrow \infty $:%
\begin{eqnarray*}
\Delta (\lambda ) &=&\left[ k\sin k-\cos kQ(1)+\frac{\sin k}{k}q_{1}(1)%
\right] +\gamma _{0}\left[ C^{\prime }(1,\lambda )S(\xi _{0},\lambda )-C(\xi
_{0},\lambda )S^{\prime }(1,\lambda )\right] \bigskip \\
&&+h\left[ \cos k+\frac{\sin k}{k}Q(1)\right] +Hh\left[ \frac{\sin k}{k}-%
\frac{\cos k}{k^{2}}Q(1)\right] \bigskip \\
&&-H\left[ \cos k+\frac{\sin k}{k}Q(1)+\frac{\cos k}{k^{2}}q_{1}(x)\right]
+H\gamma _{0}\left[ S(\xi _{0},\lambda )C(1,\lambda )-C(\xi _{0},\lambda
)S(1,\lambda )\right] \bigskip \\
&&+\gamma _{1}\gamma _{0}\left[ C(\xi _{0},\lambda )S(\xi _{1},\lambda
)\bigskip -S(\xi _{0},\lambda )C(\xi _{1},\lambda )\right] \bigskip \\
&&+\gamma _{1}\left[ \cos k\xi _{1}+\frac{\sin k\xi _{1}}{k}Q(1)+\frac{\cos
k\xi _{1}}{k^{2}}q_{1}(\xi _{1})\right] \bigskip \\
&&-\gamma _{1}\left[ h\frac{\sin k\xi _{1}}{k}-h\frac{\cos k\xi _{1}}{k^{2}}%
Q(\xi _{1})\right] +O\left( \dfrac{1}{k^{3}}\exp \left\vert \tau \right\vert
\right)
\end{eqnarray*}%
and so%
\begin{equation}
\Delta (\lambda )=k\sin k+w\cos k+\gamma _{1}\cos \left( k\xi _{1}\right)
-\gamma _{0}\cos k(1-\xi _{0})+O\left( \exp \left\vert \tau \right\vert
\right) ,\bigskip  \label{7}
\end{equation}%
where $w=h-H-Q(1).$ Let $G_{n}(\varepsilon )=\left\{ k:\left\vert k-n\pi
\right\vert <\varepsilon \right\} $ for $n=1,2,...$. It follows from (\ref{7}%
) that there exist some $M(\varepsilon )>0$ such that $\left\vert \Delta
(\lambda )\right\vert \geq M(\varepsilon )\left\vert k\right\vert \exp
\left\vert \tau \right\vert $ for sufficiently large $\left\vert
k\right\vert $ in $G_{n}(\varepsilon )$ \ Therefore $\lambda _{n}$ must be a
real number for sufficiently large $n$.

Moreover, if we apply Rouch\'{e} theorem to $h_{1}(\lambda )=k\sin k$ and $%
h_{2}(\lambda )=w\cos k+\gamma _{1}\cos \left( k\xi _{1}\right) -\gamma
_{0}\cos k(1-\xi _{0})+O\left( \exp \left\vert \tau \right\vert \right) $ on 
$\partial G_{n}(\varepsilon )$ for sufficiently small $\varepsilon ,$ we can
see that zeros of $\Delta (\lambda )$ satisfy%
\begin{equation*}
k_{n}=n\pi +\mu _{n},\text{ \ }\mu _{n}=o(1),\text{\ \ }n\rightarrow \infty .
\end{equation*}%
It follows from (7) that 
\begin{equation*}
\sin \left( n\pi +\mu _{n}\right) +O(\frac{1}{n})=0.
\end{equation*}%
Hence $\sin \left( \mu _{n}\right) =O(\frac{1}{n})$ and so $\mu _{n}=O(\frac{%
1}{n}).$ Thus%
\begin{equation}
k_{n}=n\pi +O(\frac{1}{n}),\text{\ \ }n\rightarrow \infty .  \label{8}
\end{equation}%
Using (7) and (\ref{8}) together, we get%
\begin{equation*}
\sin k_{n}+\frac{w}{n\pi }\cos k_{n}+\frac{\gamma _{1}}{n\pi }\cos \left(
k_{n}\xi _{1}\right) -\frac{\gamma _{0}}{n\pi }\cos \left( k_{n}(1-\xi
_{0})\right) +o(\frac{1}{n})=0.
\end{equation*}%
Therefore, we obtain 
\begin{equation}
\tan k_{n}=-\frac{w}{n\pi }-\frac{\gamma _{1}}{n\pi }\frac{\cos \left(
k_{n}\xi _{1}\right) }{\cos k_{n}}+\frac{\gamma _{0}}{n\pi }\frac{\cos
\left( k_{n}(1-\xi _{0})\right) }{\cos k_{n}}+o(\frac{1}{n}).  \label{9}
\end{equation}%
On the other hand, we have 
\begin{equation}
\frac{\cos \left( k_{n}\xi _{1}\right) }{n\pi \cos k_{n}}=(-1)^{n}\frac{\cos
\left( n\pi \xi _{1}\right) }{n\pi }+o(\frac{1}{n})  \label{10}
\end{equation}%
and%
\begin{equation}
\frac{\cos \left( k_{n}(1-\xi _{0})\right) }{n\pi \cos k_{n}}=(-1)^{n}\frac{%
\cos \left( n\pi (1-\xi _{0})\right) }{n\pi }+o(\frac{1}{n})  \label{11}
\end{equation}%
Using (\ref{10}) and (\ref{11}) in (\ref{9}), we get 
\begin{equation*}
\tan k_{n}=-\frac{w}{n\pi }-\gamma _{1}(-1)^{n}\frac{\cos \left( n\pi \xi
_{1}\right) }{n\pi }+\gamma _{0}(-1)^{n}\frac{\cos \left( n\pi (1-\xi
_{0})\right) }{n\pi }+o(\frac{1}{n}).
\end{equation*}%
Using Taylor's expansion of Arctangent, the proof can be concluded.
\end{proof}

It is clear that%
\begin{equation}
\varphi (x,\lambda _{n})=U(S(x,\lambda _{n}))C(x,\lambda _{n})-U(C(x,\lambda
_{n}))S(x,\lambda _{n})  \label{12}
\end{equation}%
From (\ref{12}) and Lemma 1, we can prove \ easily the following lemma:

\begin{lemma}
\bigskip The asymptotic formula%
\begin{equation}
\varphi (x,\lambda _{n})=\left\{ 
\begin{array}{c}
\left. \cos k_{n}x+\frac{\left( Q(x)-h\right) }{k_{n}}\sin k_{n}x+\frac{%
\gamma _{0}}{k_{n}}\sin k_{n}\left( x-\xi _{0}\right) +O\left( \dfrac{1}{%
k_{n}^{2}}\exp \left\vert \tau \right\vert x\right) ,\text{ \ for }h,\text{ }%
H\in 
\mathbb{R}
,\right. \bigskip \\ 
\left. \frac{\sin k_{n}x}{k_{n}}-\frac{\cos kx}{k_{n}^{2}}Q(x)+O\left( 
\dfrac{1}{k_{n}^{3}}\exp \left\vert \tau \right\vert x\right) ,\medskip 
\text{ \ for }h=\infty ,\text{ }H\in 
\mathbb{R}
,\right. \bigskip \\ 
\left. \frac{\sin k_{n}(1-x)}{k_{n}}+\frac{\cos k_{n}(1-x)}{k_{n}^{2}}%
Q(x)+O\left( \dfrac{1}{k_{n}^{3}}\exp \left\vert \tau \right\vert x\right) ,%
\text{ \ for }h\in 
\mathbb{R}
,\text{ }H=\infty \right.%
\end{array}%
\right.  \label{13}
\end{equation}%
is valid for sufficiently large $n.$
\end{lemma}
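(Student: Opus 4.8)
The plan is to feed the closed-form representation (\ref{12}) of the eigenfunction into the asymptotic expansions (\ref{4}) and (\ref{5}) and collect terms order by order in $1/k_n$. The first step is to evaluate the boundary functional $U$ on the fundamental solutions. Using $S(0,\lambda)=0$, $S'(0,\lambda)=1$, $C(0,\lambda)=1$, $C'(0,\lambda)=0$ one finds
\begin{equation*}
U(S(x,\lambda))=1-\gamma_0 S(\xi_0,\lambda),\qquad U(C(x,\lambda))=h-\gamma_0 C(\xi_0,\lambda).
\end{equation*}
Linearity of $U$ then gives $U(\varphi)=U(S)U(C)-U(C)U(S)=0$, so (\ref{12}) automatically meets the left-hand condition, and the eigenvalue restriction $V(\varphi)=0$ enters only through Lemma 1. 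Substituting and regrouping yields the convenient form
\begin{equation*}
\varphi(x,\lambda)=\left[C(x,\lambda)-hS(x,\lambda)\right]+\gamma_0\left[C(\xi_0,\lambda)S(x,\lambda)-S(\xi_0,\lambda)C(x,\lambda)\right].
\end{equation*}

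For case (i) I would expand each term with (\ref{4})--(\ref{5}). The first bracket gives $\cos kx+\frac{1}{k}(Q(x)-h)\sin kx+O(k^{-2}\exp|\tau|x)$, since the $k^{-2}$ contributions of $C$ and of $S$ fall into the remainder. In the second bracket only the leading order of each factor survives to order $k^{-1}$, and the identity $\cos k\xi_0\,\sin kx-\sin k\xi_0\,\cos kx=\sin k(x-\xi_0)$ collapses it to $\frac{\gamma_0}{k}\sin k(x-\xi_0)+O(k^{-2}\exp|\tau|x)$. Adding the two brackets and replacing $k$ by $k_n$ (legitimate by Lemma 1, which also guarantees $k_n\to\infty$) reproduces the first line of (\ref{13}).

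For the degenerate cases the relevant endpoint condition collapses to a Dirichlet condition: dividing $U(y)=0$ by $h$ and letting $h\to\infty$ turns the left condition into $y(0)=0$, while dividing $V(y)=0$ by $H$ and letting $H\to\infty$ turns the right condition into $y(1)=0$. In case (ii) the eigenfunction is therefore a constant multiple of $S(x,\lambda_n)$, and the second line of (\ref{13}) is exactly (\ref{5}) at $\lambda=\lambda_n$. In case (iii) the eigenfunction spans the one-dimensional solution space cut out by $y(1)=0$; writing $t=1-x$ converts the equation into a Sturm-Liouville equation in $t$ with potential $q(1-t)$ and a Dirichlet initial condition at $t=0$, so that the right-endpoint analogue of (\ref{5}) produces the third line of (\ref{13}) after the normalization of the leading $\sin k_n(1-x)/k_n$ term is fixed.

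The computation is routine once the representation is in hand; the only delicate point is the bookkeeping of orders — keeping the product of two $O(k^{-1})$ factors in the cross bracket and the $O(k^{-2})$ tails of (\ref{4})--(\ref{5}) consistently inside the asserted remainder — together with matching the constant in the $Q$-term of case (iii) to the chosen normalization and to the reversed direction of integration induced by $t=1-x$.
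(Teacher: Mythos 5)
Your proposal is correct and follows exactly the route the paper indicates: it asserts that Lemma 2 follows "from (\ref{12}) and Lemma 1" without writing out the details, and your computation of $U(S)=1-\gamma_0 S(\xi_0,\lambda)$, $U(C)=h-\gamma_0 C(\xi_0,\lambda)$, the regrouping, and the substitution of (\ref{4})--(\ref{5}) with the identity $\cos k\xi_0\sin kx-\sin k\xi_0\cos kx=\sin k(x-\xi_0)$ is precisely that omitted calculation. Your handling of the degenerate cases (reduction to $S(x,\lambda_n)$ for $h=\infty$, and the reflection $t=1-x$ with the normalization $Q(1)=0$ for $H=\infty$) is likewise the intended argument.
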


\section{\textbf{Inverse nodal problems: Uniqueness and reconstruction}}

We can see from Lemma 2 that $\varphi (x,\lambda _{n})$ has exactly $n-1$
nodal points in $\left( 0,1\right) .$ Let $X=\left\{ x_{n}^{j}:n=0,1,2,...%
\text{\ and }j=1,2,...,n-1\right\} $ be the set of nodal points. We assume
that $\int_{0}^{1}q(x)dx=0.$ Otherwise, the term $q(x)-\int_{0}^{1}q(x)dx$
is determined uniquely, instead of $q(x).$ \ 

\begin{lemma}
The elements of $X$ satisfy the following asymptotic formulas for
sufficiently large $n$,%
\begin{equation*}
x_{n}^{j}=\left\{ 
\begin{array}{c}
\dfrac{j+1/2}{n}+\dfrac{h-H+(-1)^{n}A_{n}}{n^{2}\pi ^{2}}\dfrac{\left(
j+1/2\right) }{n}+\dfrac{\left( Q(x_{n}^{j})-h\right) }{n^{2}\pi ^{2}}%
+\medskip \text{ \ \ \ \ \ \ \ \ \ \ \ \ \ \ \ \ \ \ \ \ \ \ \ \ \ \ \ \ \ \
\ \ \ \ \ \ \ \ \ \ \ \ } \\ 
\text{ \ \ \ \ \ \ \ \ \ \ \ \ \ \ \ \ \ \ \ \ \ \ \ \ \ \ \ \ \ \ \ \ }+%
\dfrac{\gamma _{0}}{n^{2}\pi ^{2}}\cos \left( n\pi \xi _{0}\right) +o\left( 
\frac{1}{n^{2}}\right) ,\bigskip \text{ \ \ \ \ \ \ if }h,H\in 
\mathbb{R}
\\ 
\dfrac{j}{n+\frac{1}{2}}-\dfrac{H-(-1)^{n}\gamma _{1}\sin \left( \left( n+%
\frac{1}{2}\right) \pi \xi _{1}\right) }{\left( n+\frac{1}{2}\right) ^{2}\pi
^{2}}\dfrac{j}{\left( n+\frac{1}{2}\right) }+\medskip \text{ \ \ \ \ \ \ \ \
\ \ \ \ \ \ \ \ \ \ \ \ \ \ \ \ \ \ \ \ \ \ \ \ \ \ \ \ \ \ \ \ \ \ } \\ 
\text{ \ \ \ \ \ \ \ \ \ \ \ \ \ \ \ \ \ \ \ \ \ \ \ \ \ \ \ \ \ \ \ \ \ \ }+%
\dfrac{Q(x_{n}^{j})}{\left( n+\frac{1}{2}\right) ^{2}\pi ^{2}}+o\left( \frac{%
1}{n^{2}}\right) ,\bigskip \text{ \ \ \ \ \ \ \ \ \ \ \ \ \ \ if }h=\infty
,H\in 
\mathbb{R}
\\ 
\dfrac{j+\frac{1}{2}}{n+\frac{1}{2}}+\left[ h-\gamma _{0}\cos \left( \left(
n+\frac{1}{2}\right) \pi \xi _{0}\right) \right] \dfrac{j+\frac{1}{2}}{%
\left( n+\frac{1}{2}\right) ^{3}\pi ^{2}}-\dfrac{h-Q(x_{n}^{j})}{\left( n+%
\frac{1}{2}\right) ^{2}\pi ^{2}}+\medskip \text{ \ \ \ \ \ \ \ \ \ \ \ \ }
\\ 
\text{ \ \ \ \ \ \ \ \ \ \ \ \ \ \ \ \ \ \ \ \ \ \ \ \ \ \ \ \ }+\gamma _{0}%
\dfrac{\cos \left( \left( n+\frac{1}{2}\right) \pi \xi _{0}\right) }{\left(
n+\frac{1}{2}\right) ^{2}\pi ^{2}}+o\left( \frac{1}{n^{2}}\right) ,\bigskip 
\text{ \ \ \ \ \ if }H=\infty ,h\in 
\mathbb{R}%
\end{array}%
\right.
\end{equation*}%
where $A_{n}=\left[ \gamma _{1}\cos \left( n\pi \xi _{1}\right) -\gamma
_{0}\cos \left( n\pi (1-\xi _{0})\right) \right] .$
\end{lemma}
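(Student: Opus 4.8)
The plan is to read off the nodal points directly as the zeros of the eigenfunction asymptotics furnished by Lemma 2, treating each of the three boundary-condition cases the same way and differing only in the leading trigonometric factor. Recall that $x_n^j$ is defined by $\varphi(x_n^j,\lambda_n)=0$. In case (i), Lemma 2 gives $\varphi(x,\lambda_n)=\cos k_nx+\frac{Q(x)-h}{k_n}\sin k_nx+\frac{\gamma_0}{k_n}\sin k_n(x-\xi_0)+O\!\left(k_n^{-2}\exp|\tau|x\right)$, which is a small perturbation of $\cos k_nx$; hence its zeros in $(0,1)$ lie near the zeros $(j+\tfrac12)\pi/k_n$ of the leading term. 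So I would first record that $k_nx_n^j=(j+\tfrac12)\pi+\delta_n^j$ with $\delta_n^j=o(1)$, and then bootstrap to pin down $\delta_n^j$.

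First I would substitute $x=x_n^j$ into the asymptotic and set it to zero. Using $\cos\big((j+\tfrac12)\pi+\delta\big)=-(-1)^j\sin\delta$ and $\sin\big((j+\tfrac12)\pi+\delta\big)=(-1)^j\cos\delta$, and dividing by $(-1)^j$, the defining equation collapses to $-\sin\delta_n^j+\frac{Q(x_n^j)-h}{k_n}\cos\delta_n^j+\frac{\gamma_0}{k_n}\sin k_n(x_n^j-\xi_0)+O(k_n^{-2})=0$. This already forces $\delta_n^j=O(1/k_n)$, so to the required order $\sin\delta_n^j=\delta_n^j+O(k_n^{-3})$ and $\cos\delta_n^j=1+O(k_n^{-2})$. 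For the nonlocal term I would use $k_n\xi_0=n\pi\xi_0+O(1/n)$ (from Lemma 1) to write $\sin k_n(x_n^j-\xi_0)=\sin\big((j+\tfrac12)\pi-n\pi\xi_0\big)+O(1/n)=(-1)^j\cos(n\pi\xi_0)+O(1/n)$; since this sits behind a factor $1/k_n$, the $O(1/n)$ error is harmless. Solving gives $\delta_n^j=\frac{Q(x_n^j)-h}{k_n}+\frac{\gamma_0}{k_n}\cos(n\pi\xi_0)+O(k_n^{-2})$, whence $x_n^j=\frac{(j+\frac12)\pi}{k_n}+\frac{Q(x_n^j)-h}{k_n^2}+\frac{\gamma_0}{k_n^2}\cos(n\pi\xi_0)+O(k_n^{-3})$.

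It then remains to convert powers of $1/k_n$ into powers of $1/n$ via Lemma 1. Using the normalization $\int_0^1q=0$, i.e. $Q(1)=0$, Lemma 1 reads $k_n=n\pi+\frac{\kappa_n}{n\pi}+o(\frac1n)$ with $\kappa_n=H-h-(-1)^nA_n$, so $\frac1{k_n}=\frac1{n\pi}\big(1-\frac{\kappa_n}{n^2\pi^2}+o(n^{-2})\big)$ and $\frac1{k_n^2}=\frac1{n^2\pi^2}+O(n^{-4})$. Expanding $\frac{(j+1/2)\pi}{k_n}$ produces the main term $\frac{j+1/2}{n}$ together with $-\frac{(j+1/2)\kappa_n}{n^3\pi^2}=\frac{h-H+(-1)^nA_n}{n^2\pi^2}\cdot\frac{j+1/2}{n}$, while the remaining two terms become $\frac{Q(x_n^j)-h}{n^2\pi^2}$ and $\frac{\gamma_0}{n^2\pi^2}\cos(n\pi\xi_0)$ up to $o(n^{-2})$; this is exactly the stated formula. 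Cases (ii) and (iii) run identically, the only changes being that the leading factor is $\sin k_nx$ (zeros near $j\pi/k_n$) or $\sin k_n(1-x)$ (zeros near $1-(n-j)\pi/k_n$), that $k_n^0=(n+\tfrac12)\pi$, and that the corresponding $\kappa_n$ from Lemma 1 is used.

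I expect the main obstacle to be neither the perturbation solve nor the final expansion, but the careful bookkeeping of orders in the trigonometric reductions: one must keep track of which terms are $O(1/k_n)$ versus $O(1/k_n^2)$, justify replacing $k_n$ by $k_n^0=n\pi$ (resp.\ $(n+\tfrac12)\pi$) inside every cosine and sine that already carries a $1/k_n^2$ prefactor (the induced error being $O(n^{-3})=o(n^{-2})$), and confirm that the implicit self-reference through $Q(x_n^j)$ is consistent because $x_n^j=\frac{j+1/2}{n}+O(n^{-2})$ and $Q\in C^1$. Getting these error estimates uniform in $j$ is the only genuinely delicate point.
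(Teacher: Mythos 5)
Your proposal is correct and follows essentially the same route as the paper: substitute the eigenfunction asymptotics from Lemma 2 into $\varphi(x_n^j,\lambda_n)=0$, solve the resulting perturbed trigonometric equation for $k_nx_n^j$ near $(j+\tfrac12)\pi$, reduce the nonlocal term to $\cos(n\pi\xi_0)$ using $k_nx_n^j=(j+\tfrac12)\pi+O(1/n)$, and then convert powers of $1/k_n$ into powers of $1/n$ via the eigenvalue asymptotics of Lemma 1. The paper phrases the middle step as inverting $\tan\left(k_nx_n^j-\tfrac{\pi}{2}\right)$ rather than introducing $\delta_n^j$ explicitly, but the computation and the resulting formula (including the coefficient $h-H+(-1)^nA_n$ coming from $w+(-1)^nA_n$ with $Q(1)=0$) are identical.
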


\begin{proof}
\bigskip As before, we consider only the first case. One can obtain
similarly desired formulas for the other cases. Use the asymptotic formula (%
\ref{13}) to get%
\begin{equation*}
0=\varphi (x_{n}^{j},\lambda _{n})=\cos k_{n}x_{n}^{j}+\frac{\left(
Q(x_{n}^{j})-h\right) }{k_{n}}\sin k_{n}x_{n}^{j}+\frac{\gamma _{0}}{k_{n}}%
\sin k_{n}\left( x_{n}^{j}-\xi _{0}\right) +o\left( \dfrac{1}{k_{n}}\right)
\end{equation*}%
and so%
\begin{equation*}
\tan \left( k_{n}x_{n}^{j}-\frac{\pi }{2}\right) =\frac{\left(
Q(x_{n}^{j})-h\right) }{k_{n}}+\frac{\gamma _{0}}{k_{n}}\frac{\sin
k_{n}\left( x_{n}^{j}-\xi _{0}\right) }{\sin k_{n}x_{n}^{j}}+o\left( \dfrac{1%
}{k_{n}}\right) .
\end{equation*}%
This yields

\begin{equation}
x_{n}^{j}=\frac{\left( j+1/2\right) \pi }{k_{n}}+\frac{\left(
Q(x_{n}^{j})-h\right) }{k_{n}^{2}}+\frac{\gamma _{0}}{k_{n}^{2}}\frac{\sin
k_{n}\left( x_{n}^{j}-\xi _{0}\right) }{\sin k_{n}x_{n}^{j}}+o\left( \dfrac{1%
}{k_{n}^{2}}\right) .  \label{14}
\end{equation}%
Using $k_{n}x_{n}^{j}=\left( j+1/2\right) \pi +O(\frac{1}{n}),$ $%
n\rightarrow \infty $ we can show%
\begin{equation*}
\frac{\sin k_{n}\left( x_{n}^{j}-\xi _{0}\right) }{k_{n}^{2}\sin
k_{n}x_{n}^{j}}=\frac{\cos \left( n\pi \xi _{0}\right) }{n^{2}\pi ^{2}}%
+o\left( \frac{1}{n^{2}}\right) .
\end{equation*}%
On the other hand, we have 
\begin{eqnarray*}
\frac{1}{k_{n}} &=&\frac{1}{n\pi }\left( 1+\frac{w}{n^{2}\pi ^{2}}+\frac{%
(-1)^{n}}{n^{2}\pi ^{2}}A_{n}+o\left( \dfrac{1}{n^{3}}\right) \right) \\
\frac{1}{k_{n}^{2}} &=&\frac{1}{n^{2}\pi ^{2}}+o\left( \dfrac{1}{n^{3}}%
\right) .
\end{eqnarray*}%
using by Lemma 1. Therefore, it is concluded that,%
\begin{equation*}
x_{n}^{j}=\frac{j+1/2}{n}+\frac{h-H+(-1)^{n}A_{n}}{n^{2}\pi ^{2}}\frac{%
\left( j+1/2\right) }{n}+\frac{\left( Q(x_{n}^{j})-h\right) }{n^{2}\pi ^{2}}+%
\frac{\gamma _{0}}{n^{2}\pi ^{2}}\cos \left( n\pi \xi _{0}\right) +o\left( 
\frac{1}{n^{2}}\right) .
\end{equation*}
\end{proof}

According to Lemma 3 the existence of a dense subset $X_{0}$ of $X$ is
obvious.

\subsection{The Case $h,$\textbf{\ }$H\in 
\mathbb{R}
$}

Consider the problem $\widetilde{L}=L\left( \widetilde{q},\widetilde{h},%
\widetilde{H},\widetilde{\gamma }_{0},\widetilde{\gamma }_{1},\xi _{0},\xi
_{1}\right) $ under the same assumptions with $L.$ It is assumed in what
follows that if a certain symbol $s$ denotes an object related to the
problem $L$ then $\widetilde{s}$ denotes the coresponding object related to
the problem $\widetilde{L}$.

The following theorem is the first of our main results in this article.

\begin{theorem}[Uniqueness]
If $X_{0}=\widetilde{X}_{0}$ then $q(x)=\widetilde{q}(x)$ a.e. in $\left(
0,1\right) ,$ $h=\widetilde{h},$ $H=\widetilde{H},$ $\gamma _{0}=\widetilde{%
\gamma }_{0}$ and $\gamma _{1}=\widetilde{\gamma }_{1}.$ Thus, the potential 
$q(x),$ a.e. in $\left( 0,1\right) ,$ the coefficients $\gamma _{0},$ $%
\gamma _{1},$ $h$\ and $H$ are uniquely determined by $X_{0}$.
\end{theorem}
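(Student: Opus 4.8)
The plan is to read off the potential and the three boundary constants directly from the nodal asymptotics in Lemma 3, exploiting the rationality of $\xi_0,\xi_1$. First I would fix an arbitrary $x\in(0,1)$ and, using that the points $(j+\tfrac12)/n$ are dense in $(0,1)$, choose indices $j_n$ with $(j_n+\tfrac12)/n\to x$; by the leading term of Lemma 3 the corresponding nodes then satisfy $x_n^{j_n}\to x$ as well. Since both $L$ and $\widetilde L$ carry the same $\xi_0,\xi_1$ and the same leading asymptotics $x_n^{j}\sim(j+\tfrac12)/n$, the hypothesis $X_0=\widetilde X_0$ forces $x_n^{j}=\widetilde x_n^{j}$ for every admissible $j$ and all sufficiently large $n$. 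I would then form the rescaled quantity
$$g_n(x):=n^2\pi^2\Bigl(x_n^{j_n}-\tfrac{j_n+1/2}{n}\Bigr),$$
so that $g_n(x)=\widetilde g_n(x)$ for all large $n$.

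By Lemma 3 (first case),
$$g_n(x)=\bigl(h-H+(-1)^nA_n\bigr)\tfrac{j_n+1/2}{n}+\bigl(Q(x_n^{j_n})-h\bigr)+\gamma_0\cos(n\pi\xi_0)+o(1).$$
The factors $(-1)^nA_n$ and $\cos(n\pi\xi_0)$ are periodic in $n$ because $\xi_0,\xi_1$ are rational; letting $N$ be a common period and restricting $n$ to a fixed residue class $r\pmod N$, they become constants $\beta_r$ and $d_r$, while $\tfrac{j_n+1/2}{n}\to x$ and $Q(x_n^{j_n})\to Q(x)$ by continuity of $Q$. Passing to the limit along this subsequence and using $g_n(x)=\widetilde g_n(x)$ gives, for every $x$ in the dense set and every residue $r$,
$$(h-H+\beta_r)x+Q(x)-h+\gamma_0 d_r=(\widetilde h-\widetilde H+\widetilde\beta_r)x+\widetilde Q(x)-\widetilde h+\widetilde\gamma_0 d_r.$$
Hence $Q(x)-\widetilde Q(x)$ is affine in $x$ on a dense set, so by continuity on all of $[0,1]$. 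Since $Q(0)=\widetilde Q(0)=0$ and, by the normalization $\int_0^1 q=0$, also $Q(1)=\widetilde Q(1)=0$, this affine function vanishes at $x=0$ and $x=1$ and is therefore identically zero; thus $Q=\widetilde Q$ and $q=2Q'=\widetilde q$ on $(0,1)$.

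With $Q=\widetilde Q$ in hand, I would match the coefficient of $x$ and the constant term in the displayed identity across residue classes. The constant terms yield $(\gamma_0-\widetilde\gamma_0)\cos(n\pi\xi_0)=h-\widetilde h$ for all $n$; since $\xi_0\in(0,1)$ is rational, $\cos(n\pi\xi_0)$ is non-constant in $n$, so evaluating at two values of $n$ where it differs forces $\gamma_0=\widetilde\gamma_0$ and then $h=\widetilde h$. Feeding $\gamma_0=\widetilde\gamma_0,\ h=\widetilde h$ into the coefficient of $x$ gives $(-1)^n(\widetilde\gamma_1-\gamma_1)\cos(n\pi\xi_1)=\widetilde H-H$ for all $n$, and the same non-constancy argument (now for $\xi_1$) gives $\gamma_1=\widetilde\gamma_1$ and $H=\widetilde H$. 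The main obstacle is precisely this disentangling of the oscillatory contributions from the constants: it is where the rationality of $\xi_0,\xi_1$ is essential (it renders the oscillations periodic, hence amenable to subsequence limits), together with the non-constancy of $\cos(n\pi\xi_i)$ for $\xi_i\in(0,1)$, which supplies enough independent relations to solve for $h,H,\gamma_0,\gamma_1$ separately. A secondary point to verify carefully is the index identification $x_n^{j}=\widetilde x_n^{j}$, which rests on the asymptotic separation of the nodes at each level $n$.
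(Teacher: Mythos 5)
Your proof is correct, and its second half takes a genuinely different route from the paper's. The paper's Step 1 is precisely your residue class $r\equiv 0$: taking $m=2r_{0}r_{1}n$ freezes all the oscillatory factors at the value $1$ and produces the single limit function $f(x)=(h-H+\gamma_{1}-\gamma_{0})x+Q(x)-h+\gamma_{0}$, from which the paper reads off $q$ together with the \emph{combinations} $\gamma_{0}-h=f(0)$ and $\gamma_{1}-H=f(1)$ --- but nothing more. To split these combinations, the paper changes tools entirely (Steps 2 and 3): it takes nodal sequences converging to $\xi_{0}$ and to $\xi_{1}$, integrates the Green's-type identity for $\varphi$ and $\widetilde{\varphi}$ over $(\xi_{0},x_{n}^{j})$ (resp.\ $(\xi_{1},x_{n}^{j})$) to get $\varphi'(\xi_{0},\lambda_{n})\widetilde{\varphi}(\xi_{0},\widetilde{\lambda}_{n})-\widetilde{\varphi}'(\xi_{0},\widetilde{\lambda}_{n})\varphi(\xi_{0},\lambda_{n})=o(1)$, and then feeds in the eigenvalue asymptotics of Lemma 1 and the eigenfunction asymptotics of Lemma 2 to arrive at relations such as $(\widetilde{h}-h)\cos^{2}n\pi\xi_{0}+(\gamma_{0}-\widetilde{\gamma}_{0})\cos n\pi\xi_{0}=o(1)$. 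You instead extract the same separation from Lemma 3 alone by running over all residue classes of $n$ modulo the common period and exploiting the non-constancy of $\cos(n\pi\xi_{0})$ and of $(-1)^{n}\cos(n\pi\xi_{1})$ for rational $\xi_{i}\in(0,1)$. Your route is more self-contained (no Green's identity and no appeal to Lemmas 1--2 at this stage) and makes explicit exactly where the rationality of $\xi_{0},\xi_{1}$ and the restriction $\xi_{i}\in(0,1)$ are used; the price is that you need $X_{0}$ to contain, for each $x$, nodal sequences with indices $n$ in every residue class, a mildly stronger density assumption than the single arithmetic progression used in the paper's Step 1 --- though the paper's own Steps 2--3 implicitly require index sequences realizing at least two distinct values of $\cos n\pi\xi_{0}$ and $\sin n\pi \xi_1 \neq 0$, so neither treatment is more demanding in substance. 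Your closing caveat about justifying $x_{n}^{j}=\widetilde{x}_{n}^{j}$ is well placed; the paper passes over this identification silently.
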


\begin{proof}
\textbf{Step 1.} Put $\xi _{0}=\dfrac{p_{0}}{r_{0}}$ and $\xi _{1}=\dfrac{%
p_{1}}{r_{1}}$, where $p_{i},r_{i}\in 
\mathbb{Z}
$ for $i=0,1.$ For each fixed $x\in \left[ 0,1\right] $, there exists a
sequence $\left( x_{n}^{j}\right) $ converges to $x.$ Clearly the
subsequence $\left( x_{m}^{j}\right) $ converges also to $x$ for $%
m=2r_{0}r_{1}n.$ On the other hand, $\underset{m\rightarrow \infty }{\lim }%
A_{m}=\gamma _{1}-\gamma _{0}.$ Therefore we can see from Lemma 3 the
following limit exists and given equality holds:%
\begin{equation}
\left. \underset{m\rightarrow \infty }{\lim }m^{2}\pi ^{2}\left( x_{m}^{j}-%
\frac{j}{m}\right) =f(x)=\left( h-H+\gamma _{1}-\gamma _{0}\right)
x+Q(x)-h+\gamma _{0},\right.  \label{15}
\end{equation}%
Direct calculations in (\ref{15}) yield 
\begin{eqnarray*}
&&\left. \gamma _{0}-h=f(0),\medskip \right. \\
&&\left. \gamma _{1}-H=f(1),\right. \medskip \\
&&\left. q(x)=2\left( f^{\prime }(x)+f(0)-f(1)\right) .\right.
\end{eqnarray*}

Since $X_{0}=\widetilde{X}_{0}$ then $f(x)=\widetilde{f}(x)$ and so $q(x)=%
\widetilde{q}(x)$, a.e. in $\left( 0,1\right) .$

\textbf{Step 2.} To show $\widetilde{h}=h\ $and $\gamma _{0}=\widetilde{%
\gamma }_{0}$ consider a sequence $\left\{ x_{n}^{j}\right\} \subset X_{0}$
converges to $\xi _{0}$ and write the equation (\ref{1}) for $\varphi
(x,\lambda _{n})$ and $\widetilde{\varphi }(x,\widetilde{\lambda }_{n})$:

\begin{eqnarray*}
-\widetilde{\varphi }^{\prime \prime }\left( x,\widetilde{\lambda }%
_{n}\right) +q(x)\widetilde{\varphi }\left( x,\widetilde{\lambda }%
_{n}\right) &=&\widetilde{\lambda }_{n}\widetilde{\varphi }\left( x,%
\widetilde{\lambda }_{n}\right) ,\bigskip \\
-\varphi ^{\prime \prime }\left( x,\lambda _{n}\right) +q(x)\varphi \left(
x,\lambda _{n}\right) &=&\lambda _{n}\varphi \left( x,\lambda _{n}\right) .
\end{eqnarray*}%
If we apply the procedure:

(i): multiplied by $\varphi \left( x,\lambda _{n}\right) $ and $\widetilde{%
\varphi }\left( x,\widetilde{\lambda }_{n}\right) ,$ respectively; (ii):
subtracted from each other and (iii): integrated over the interval $\left(
\xi _{0},x_{n}^{j}\right) $ the equality{}%
\begin{equation*}
\varphi ^{\prime }\left( \xi _{0},\lambda _{n}\right) \widetilde{\varphi }%
\left( \xi _{0},\widetilde{\lambda }_{n}\right) -\widetilde{\varphi }%
^{\prime }\left( \xi _{0},\widetilde{\lambda }_{n}\right) \varphi \left( \xi
_{0},\lambda _{n}\right) =\left( \widetilde{\lambda }_{n}-\lambda
_{n}\right) \int\limits_{\xi _{0}}^{x_{n}^{j}}\widetilde{\varphi }\left( x,%
\widetilde{\lambda }_{n}\right) \varphi \left( x,\lambda _{n}\right) dx
\end{equation*}%
is obtained. From Lemma 1 the following estimate holds for sufficiently
large $n$ 
\begin{equation}
\varphi ^{\prime }\left( \xi _{0},\lambda _{n}\right) \widetilde{\varphi }%
\left( \xi _{0},\widetilde{\lambda }_{n}\right) -\widetilde{\varphi }%
^{\prime }\left( \xi _{0},\widetilde{\lambda }_{n}\right) \varphi \left( \xi
_{0},\lambda _{n}\right) =o(1),\text{ }n\rightarrow \infty .  \label{16}
\end{equation}%
Using (\ref{16}) and Lemma 2 we get%
\begin{equation*}
\left[ \varphi ^{\prime }(\xi _{0},\lambda _{n})-\widetilde{\varphi }%
^{\prime }\left( \xi _{0},\widetilde{\lambda }_{n}\right) \right] \cos n\pi
\xi _{0}=o(1),\text{ }n\rightarrow \infty .
\end{equation*}%
The last equality yields%
\begin{equation*}
\left( \widetilde{h}-h\right) \cos ^{2}n\pi \xi _{0}+\left( \gamma _{0}-%
\widetilde{\gamma }_{0}\right) \cos n\pi \xi _{0}=o(1),\text{ }n\rightarrow
\infty .
\end{equation*}%
Therefore, we conclude that $\widetilde{h}=h\ $and $\gamma _{0}=\widetilde{%
\gamma }_{0}.$

\textbf{Step 3. }Finally let us prove $\gamma _{1}=\widetilde{\gamma }_{1}$
and $H=\widetilde{H}.$ Consider another sequence $\left\{ x_{n}^{j}\right\}
\subset X_{0}$ converges to $\xi _{1}.$ If we apply above procedure but take
the integral from $\xi _{1}$ to $x_{n}^{j}$, we get$\bigskip $ 
\begin{equation*}
\left. \widetilde{\varphi }^{\prime }\left( \xi _{1},\widetilde{\lambda }%
_{n}\right) \varphi \left( \xi _{1},\lambda _{n}\right) -\varphi ^{\prime
}\left( \xi _{1},\lambda _{n}\right) \widetilde{\varphi }\left( \xi _{1},%
\widetilde{\lambda }_{n}\right) =o(1),\text{ }n\rightarrow \infty \right.
\end{equation*}%
instead of (\ref{16}). From 3, we have%
\begin{equation*}
\left. \widetilde{\varphi }^{\prime }\left( \xi _{1},\widetilde{\lambda }%
_{n}\right) \left[ \frac{\varphi ^{\prime }\left( 1,\lambda _{n}\right)
+H\varphi \left( 1,\lambda _{n}\right) }{\gamma _{1}}\right] -\varphi
^{\prime }\left( \xi _{1},\lambda _{n}\right) \left[ \frac{\widetilde{%
\varphi }\left( 1,\widetilde{\lambda }_{n}\right) +\widetilde{H}\widetilde{%
\varphi }\left( 1,\widetilde{\lambda }_{n}\right) }{\widetilde{\gamma }_{1}}%
\right] =o(1),\text{ }n\rightarrow \infty .\right. \bigskip
\end{equation*}%
Using Lemma1 and Lemma 2, it can be calculated that $\bigskip $%
\begin{equation*}
\sin n\pi \xi _{1}\left[ \left( \frac{H-h}{\gamma _{1}}-\frac{\widetilde{H}-h%
}{\widetilde{\gamma }_{1}}\right) \frac{(-1)^{n}}{n\pi }+\frac{\gamma _{0}}{%
n\pi }\left( \frac{1}{\gamma _{1}}-\frac{1}{\widetilde{\gamma }_{1}}\right)
\cos n\pi \left( 1-\xi _{0}\right) \right] =o(\frac{1}{n}),\text{ }%
n\rightarrow \infty .
\end{equation*}%
This yields%
\begin{equation*}
\sin n\pi \xi _{1}\left[ (-1)^{n}\left( \frac{H-h}{\gamma _{1}}-\frac{%
\widetilde{H}-h}{\widetilde{\gamma }_{1}}\right) +\gamma _{0}\left( \frac{1}{%
\gamma _{1}}-\frac{1}{\widetilde{\gamma }_{1}}\right) \cos n\pi \left( 1-\xi
_{0}\right) \right] =o(1).
\end{equation*}%
Hence $\gamma _{1}=\widetilde{\gamma }_{1}$ and $H=\widetilde{H}.$ This
completes the proof.
\end{proof}

\begin{corollary}[Reconstruction algorithm]
Let $X_{0}$, $\xi _{0}=\dfrac{p_{0}}{r_{0}}$ and $\xi _{1}=\dfrac{p_{1}}{%
r_{1}}$ be given. Then $q(x),$ $\gamma _{0}-h$ and $\gamma _{1}-H$ can be
reconstructed by the following algorithm:\newline
i) Denote $m=2r_{0}r_{1}n;$\newline
ii) Find $f(x)$ by (\ref{15});\newline
iii) Find $q(x),$ $\gamma _{0}-h$ and $\gamma _{1}-H$ by the formulas 
\begin{eqnarray*}
&&\left. q(x)=2\left( f^{\prime }(x)+f(0)-f(1)\right) \right. \medskip \\
&&\left. \gamma _{0}-h=f(0),\medskip \right. \\
&&\left. \gamma _{1}-H=f(1).\right. \medskip
\end{eqnarray*}%
Note that if one of the pairs $\left( h,H\right) $ and $\left( \gamma
_{0},\gamma _{1}\right) $ is given, we can find the other pair.
\end{corollary}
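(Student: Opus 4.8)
The plan is to recognize this corollary as the explicit algorithmic form of Step~1 in the proof of Theorem~1: essentially all the analytic work is already contained there, and what remains is to check that the three closed-form expressions in step~(iii) correctly invert the defining relation (\ref{15}) for $f$.

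I would first recall why step~(ii) is legitimate, i.e.\ why $f$ is recoverable from $X_0$ together with the rationals $\xi_0=p_0/r_0$, $\xi_1=p_1/r_1$. The device is the sampling $m=2r_0r_1n$: it makes $m\pi\xi_1$, $m\pi\xi_0$ and $m\pi(1-\xi_0)$ even integer multiples of $\pi$, so that $(-1)^m=1$, $\cos(m\pi\xi_0)=1$ and $A_m=\gamma_1-\gamma_0$ hold exactly. Feeding this into the first formula of Lemma~3 and letting a nodal sequence $x_m^j\to x$ collapses every oscillatory term to a constant and produces precisely the limit (\ref{15}), exactly as in Theorem~1. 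Hence $f(x)$ is determined pointwise by the nodal data alone.

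It then remains to verify step~(iii) by direct evaluation of $f(x)=(h-H+\gamma_1-\gamma_0)x+Q(x)-h+\gamma_0$. Since $Q(0)=\tfrac12\int_0^0 q=0$, putting $x=0$ gives $f(0)=\gamma_0-h$. The normalization $\int_0^1 q(x)\,dx=0$ yields $Q(1)=0$, so putting $x=1$ gives $f(1)=(h-H+\gamma_1-\gamma_0)-h+\gamma_0=\gamma_1-H$. Finally $f'(x)=(h-H+\gamma_1-\gamma_0)+\tfrac12 q(x)$ because $Q'(x)=\tfrac12 q(x)$, and adding $f(0)-f(1)=-(h-H+\gamma_1-\gamma_0)$ cancels the constant and leaves $f'(x)+f(0)-f(1)=\tfrac12 q(x)$, i.e.\ $q(x)=2\bigl(f'(x)+f(0)-f(1)\bigr)$.

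I expect no real analytic difficulty: the only delicate point is the arithmetic that forces the cosine factors in Lemma~3 to stabilize at $1$ under the sampling $m=2r_0r_1n$, and this is precisely the mechanism already exploited in Theorem~1. The final remark is then immediate, since knowledge of the differences $\gamma_0-h$ and $\gamma_1-H$ determines the pair $(\gamma_0,\gamma_1)$ once $(h,H)$ is prescribed, and conversely.
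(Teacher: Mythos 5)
Your proposal is correct and follows essentially the same route as the paper: the corollary is just the algorithmic restatement of Step~1 of the proof of Theorem~1, where the choice $m=2r_0r_1n$ freezes the oscillatory terms, yields the limit (\ref{15}), and the three formulas follow by evaluating $f$ at $0$ and $1$ (using $Q(0)=0$ and the normalization $Q(1)=0$) and differentiating. Your direct verification of step~(iii) matches the paper's ``direct calculations in (\ref{15})'' exactly.
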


\begin{example}
Consider the nonlocal BVP%
\begin{equation*}
L:\left\{ 
\begin{array}{c}
\left. \ell y:=-y^{\prime \prime }+q(x)y=\lambda y,\text{ \ \ }x\in
(0,1)\right. \medskip \\ 
\left. U(y):=y^{\prime }(0)+hy(0)=\gamma _{0}y(\frac{2}{5}),\right. \medskip
\\ 
\left. V(y):=y^{\prime }(1)+Hy(1)=\gamma _{1}y(\frac{6}{7}),\right. \medskip%
\end{array}%
\right.
\end{equation*}%
where $q(x)\in C^{1}\left[ 0,1\right] ,$ $\gamma _{0},$ $\gamma _{1},$ $h,$
and $H$ $\in 
\mathbb{R}
$ are unknown coefficients. Let $\ X_{0}=\left\{ x_{n}^{j}\right\} $ be the
given subset of nodal points which satisfy the following asymptotics%
\begin{eqnarray*}
x_{n}^{j} &=&\frac{\left( j+1/2\right) }{n}+\frac{-1+(-1)^{n}\left[ 6\cos
\left( \frac{6n\pi }{7}\right) -3\cos \left( \frac{3n\pi }{5})\right) \right]
}{n^{2}\pi ^{2}}\frac{\left( j+1/2\right) }{n}+\frac{\left( \sin \frac{%
\left( j+1/2\right) \pi }{n}-2\pi \right) }{2n^{2}\pi ^{3}}\medskip \\
&&+\frac{3}{n^{2}\pi ^{2}}\cos \left( 2n\frac{\pi }{5}\right) +o\left( \frac{%
1}{n^{2}}\right)
\end{eqnarray*}%
\newline
Let $m:=70n.$ One can calculate that, 
\begin{equation*}
\left. \underset{m\rightarrow \infty }{\lim }m^{2}\pi ^{2}\left( x_{m}^{j}-%
\frac{j}{m}\right) =f(x)=2x+\left( \frac{\sin \pi x-2\pi }{2\pi }\right)
+3\right.
\end{equation*}%
According to Theorem 1, we find 
\begin{equation*}
\left. q(x)=2\left( f^{\prime }(x)+f(0)-f(1)\right) =\cos \pi x.\right.
\end{equation*}%
and%
\begin{eqnarray*}
&&\left. \gamma _{0}-h=f(0)=2,\medskip \right. \\
&&\left. \gamma _{1}-H=f(1)=4\right. \medskip
\end{eqnarray*}%
If the pair $\left( h,H\right) $ is given as, for example, $h=1$ and\ $H=2$
then we find $\gamma _{0}=3$ and $\gamma _{1}=6$.
\end{example}

\subsection{The Case $h=\infty ,$ $H\in 
\mathbb{R}
$}

In this subsection, we consider the equation (\ref{1}) with one Dirichlet
boundary condition%
\begin{equation}
\left. U(y):=y(0)=0\right. \medskip  \label{17}
\end{equation}%
and with the nonlocal boundary condition (\ref{3}).

Let $X_{0}$ be a dense nodal points-set. For each fixed $x$ in $\left(
0,1\right) ,$ it can be choosen a sequence $\left( x_{n}^{j}\right) \subset
X_{0}$ which converges to $x.$ Therefore we can show from Lemma 3 that the
following limit exists and finite for $m=2r_{1}n$:%
\begin{eqnarray*}
&&\left. \underset{m\rightarrow \infty }{\lim }\left( m+\frac{1}{2}\right)
^{2}\pi ^{2}\left( x_{m}^{j}-\frac{j}{m+\frac{1}{2}}\right) =g(x)\right.
\medskip \\
&&\text{ \ \ \ \ \ \ \ \ \ \ \ \ \ \ \ \ \ \ \ \ \ \ \ \ \ \ \ \ \ \ \ \ \ }%
\left. =(\gamma _{1}\sin \frac{\pi }{2}\xi _{1}-H)x+\dfrac{1}{2}%
\int_{0}^{x}q(t)dt.\right.
\end{eqnarray*}%
Thus, we can prove the following theorem using methods similar to one in the
proof of Theorem 1.

\begin{theorem}
If $X_{0}=\widetilde{X}_{0}$ then $q(x)=\widetilde{q}(x)$ a.e. in $\left(
0,1\right) $, $H=\widetilde{H},$ and $\gamma _{1}=\widetilde{\gamma }_{1}.$
Moreover if $X_{0}$ and $\xi _{1}=\dfrac{p_{1}}{r_{1}}$ is given, $q(x)$ and 
$\gamma _{1}\sin \frac{\pi }{2}\xi _{1}-H$ can be reconstructed by the
following formulas:%
\begin{eqnarray*}
&&\left. q(x)=2\left( g^{\prime }(x)-g(1)\right) ,\right. \medskip \\
&&\left. \gamma _{1}\sin \frac{\pi }{2}\xi _{1}-H=g(1).\right. \medskip
\end{eqnarray*}
\end{theorem}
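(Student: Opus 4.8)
The plan is to run the three-step template of Theorem 1, now with the Dirichlet condition \eqref{17} at $x=0$ and the $h=\infty$, $H\in\mathbb{R}$ asymptotics of Lemmas 1--3. \textbf{Step 1 (recover $q$ and one combination of $H,\gamma_1$).} From the limit $g$ displayed before the statement I read off $g(x)=\left(\gamma_1\sin\frac{\pi}{2}\xi_1-H\right)x+Q(x)$ with $Q(x)=\frac{1}{2}\int_0^x q(t)\,dt$. The normalisation $\int_0^1 q=0$ gives $Q(1)=0$, so $g(1)=\gamma_1\sin\frac{\pi}{2}\xi_1-H$ and $g'(x)=g(1)+\frac{1}{2}q(x)$, which are exactly the asserted formulas $q(x)=2\left(g'(x)-g(1)\right)$ and $\gamma_1\sin\frac{\pi}{2}\xi_1-H=g(1)$. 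Since $g$ is computed purely from the nodal set as the limit along $m=2r_1n$ of $\left(m+\frac{1}{2}\right)^2\pi^2\left(x_m^j-\frac{j}{m+\frac{1}{2}}\right)$, the hypothesis $X_0=\widetilde X_0$ forces $g=\widetilde g$, and hence $q=\widetilde q$ a.e. together with the single relation $\left(\gamma_1-\widetilde\gamma_1\right)\sin\frac{\pi}{2}\xi_1=H-\widetilde H$.

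\textbf{Step 2 (separate $H$ from $\gamma_1$).} One relation cannot decouple the two constants, so I would produce a second by taking the same limit along the shifted progression $m=2r_1n+1$. With $\xi_1=p_1/r_1$ one has $\left(m+\frac{1}{2}\right)\pi\xi_1\equiv\frac{3p_1\pi}{2r_1}\pmod{2\pi}$ and $(-1)^m=-1$, so Lemma 3 yields a limit $g_1$ with $g_1(1)=-\gamma_1\sin\frac{3\pi}{2}\xi_1-H$. As before $g_1$ is an invariant of $X_0$, hence $g_1(1)=\widetilde g_1(1)$, that is $-\left(\gamma_1-\widetilde\gamma_1\right)\sin\frac{3\pi}{2}\xi_1=H-\widetilde H$. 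Subtracting the Step 1 relation and applying the sum-to-product identity gives $\left(\gamma_1-\widetilde\gamma_1\right)\left(\sin\frac{3\pi}{2}\xi_1+\sin\frac{\pi}{2}\xi_1\right)=2\left(\gamma_1-\widetilde\gamma_1\right)\sin(\pi\xi_1)\cos\frac{\pi}{2}\xi_1=0$. Since $\sin(\pi\xi_1)\cos\frac{\pi}{2}\xi_1>0$ for $\xi_1\in(0,1)$, this forces $\gamma_1=\widetilde\gamma_1$ and then $H=\widetilde H$. Equivalently one may reach the second relation by the Green's-identity argument of Theorem 1, integrating the Wronskian of $\varphi(\cdot,\lambda_n)$ and $\widetilde\varphi(\cdot,\widetilde\lambda_n)$ over $\left(\xi_1,x_n^j\right)$ with $x_n^j\to\xi_1$, eliminating $\varphi(\xi_1,\lambda_n)$ through \eqref{3}, and inserting the asymptotics of Lemmas 1 and 2.

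\textbf{Main obstacle.} The decisive point is the separation in Step 2, because Step 1 returns only the combination $\gamma_1\sin\frac{\pi}{2}\xi_1-H$. Everything rests on the genuinely $n$-dependent factor $(-1)^n\sin\left(\left(n+\frac{1}{2}\right)\pi\xi_1\right)$ realising, along two arithmetic progressions, values whose difference cannot be absorbed into a shift of $H$; the sum-to-product computation is what certifies this for every rational $\xi_1\in(0,1)$. In the Wronskian variant the same non-degeneracy resurfaces, with the added subtlety that in the Dirichlet case the eigenfunctions are $O(1/k_n)$, so the leading $O(1/n)$ part of the Wronskian at $\xi_1$ cancels identically and the coefficients of $H$ and $\gamma_1$ surface only at the next order, where Lemma 1 is needed. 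Both routes use, exactly as in Step 1 of Theorem 1, the identification $x_n^j=\widetilde x_n^j$ for large $n$, licensed by the two problems sharing the same leading nodal asymptotics; this is the one place I would be careful to argue cleanly.
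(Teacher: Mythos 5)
Your proposal is correct, and its Step 1 coincides with the paper's: the paper derives exactly the limit function $g(x)=\left(\gamma _{1}\sin \frac{\pi }{2}\xi _{1}-H\right)x+\frac{1}{2}\int_{0}^{x}q(t)\,dt$ along $m=2r_{1}n$ and reads off both reconstruction formulas from the normalisation $Q(1)=0$. Where you genuinely depart is the separation of $H$ from $\gamma _{1}$. The paper gives no details here, saying only that the theorem follows by methods similar to Theorem 1 --- i.e.\ the Green's-identity/Wronskian argument with a nodal sequence converging to $\xi _{1}$, which is your secondary route (and you correctly note the extra subtlety that in the Dirichlet case the eigenfunction is $O(1/k_{n})$, so the relevant terms appear one order lower). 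Your primary route is different and works: evaluating the same nodal limit along the shifted progression $m=2r_{1}n+1$ freezes $(-1)^{m}\sin \left( \left( m+\frac{1}{2}\right) \pi \xi _{1}\right)$ at the value $-\sin \frac{3\pi }{2}\xi _{1}$ instead of $\sin \frac{\pi }{2}\xi _{1}$, and the resulting pair of linear relations in $\left( \gamma _{1}-\widetilde{\gamma }_{1},H-\widetilde{H}\right)$ is nondegenerate because $\sin \frac{\pi }{2}\xi _{1}+\sin \frac{3\pi }{2}\xi _{1}=2\sin \left( \pi \xi _{1}\right) \cos \frac{\pi }{2}\xi _{1}>0$ for $\xi _{1}\in (0,1)$. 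This is more elementary and self-contained --- it uses only Lemma 3 and avoids the Wronskian integration entirely --- at the price of exploiting the rationality of $\xi _{1}$ through a second arithmetic progression. Both routes rest on identifying indexed nodal points $x_{n}^{j}=\widetilde{x}_{n}^{j}$ from the set equality $X_{0}=\widetilde{X}_{0}$, which you rightly flag as needing care; since the paper uses this identification without comment already in Theorem 1, your argument is at the same level of rigor as the original.
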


\begin{example}
Consider the nonlocal BVP%
\begin{equation*}
L:\left\{ 
\begin{array}{c}
\left. -y^{\prime \prime }+q(x)y=\lambda y,\text{ \ \ }x\in (0,1)\right.
\medskip \\ 
\left. y(0)=0,\right. \medskip \\ 
\left. y^{\prime }(1)+2y(1)-\gamma _{1}y(\frac{2}{5})=0,\right.%
\end{array}%
\right.
\end{equation*}%
where $q(x)\in C^{1}\left[ 0,1\right] $ and $\gamma _{1}$ are unknown\ real
coefficients. Let $\ X_{0}=\left\{ x_{n}^{j}\right\} $ be the given subset
of nodal points which satisfy the following asymptotics 
\begin{eqnarray*}
x_{n}^{j} &=&\dfrac{j}{n+\frac{1}{2}}-\dfrac{2-(-1)^{n}3\sin \left( \frac{%
2n+1}{5}\pi \right) }{\left( n+\frac{1}{2}\right) ^{2}\pi ^{2}}\dfrac{j}{%
\left( n+\frac{1}{2}\right) }\medskip + \\
&&+\dfrac{1}{2\left( n+\frac{1}{2}\right) ^{2}\pi ^{3}}\left( \sin \frac{%
j\pi }{n+\frac{1}{2}}+\frac{j\pi }{n+\frac{1}{2}}\left( \frac{j}{2n+1}-\frac{%
1}{2}\right) \right) +o\left( \frac{1}{n^{2}}\right) \bigskip \text{.}
\end{eqnarray*}%
\newline
To find $q(x)$ and $\gamma _{1}$ we take $m=10n$ and calculate the following
lmit%
\begin{equation*}
\left. \underset{m\rightarrow \infty }{\lim }\left( m+\frac{1}{2}\right)
^{2}\pi ^{2}\left( x_{m}^{j}-\frac{j}{\left( m+\frac{1}{2}\right) }\right)
=g(x)=\left( 3\sin \frac{\pi }{5}-2\right) x+\frac{\sin \pi x}{2\pi }+\frac{x%
}{2}\left( \frac{x}{2}-\frac{1}{2}\right) \right.
\end{equation*}%
Thus, we find 
\begin{eqnarray*}
&&\left. q(x)=2\left( g^{\prime }(x)-g(1)\right) =\cos \pi x+x-\frac{1}{2}%
\right. \medskip \\
&&\left. \gamma _{1}=\frac{g(1)+2}{\sin \frac{\pi }{5}}=3\right. \medskip
\end{eqnarray*}
\end{example}

\subsection{The Case $H=\infty ,$ $h\in 
\mathbb{R}
$}

In this subsection, we consider the equation (\ref{1}) with nonlocal
boundary condition (\ref{2}) and one Dirichlet boundary condition%
\begin{equation}
\left. V(y):=y(1)=0,\right. \medskip  \label{18}
\end{equation}

Let $m:=2r_{0}n.$ Here, $r_{0}$ denotes the denominators of $\xi _{0}.$

Let $X_{0}$ be a dense nodal points-set. For each fixed $x$ in $\left(
0,1\right) ,$ it can be choosen a sequence $\left( x_{n}^{j}\right) \subset
X_{0}$ which converges to $x.$ Therefore we can show from Lemma 3 that%
\begin{eqnarray*}
&&\left. \underset{m\rightarrow \infty }{\lim }\left( m+\frac{1}{2}\right)
^{2}\pi ^{2}\left( x_{m}^{j}-\frac{j+\frac{1}{2}}{m+\frac{1}{2}}\right)
=\psi (x)\right. \medskip \\
&&\left. \text{ \ \ \ \ \ \ \ \ \ \ \ \ \ \ \ \ \ \ \ \ \ \ }=(h-\gamma
_{0}\cos \frac{\pi }{2}\xi _{0})x-h+\gamma _{0}\cos \frac{\pi }{2}\xi _{0}+%
\dfrac{1}{2}\int_{0}^{x}q(t)dt.\right.
\end{eqnarray*}

\bigskip Thus,we can give the following theorem.

\begin{theorem}
If $X_{0}=\widetilde{X}_{0}$ then $q(x)=\widetilde{q}(x)$ a.e. in $\left(
0,1\right) $, $h=\widetilde{h}$ and $\gamma _{0}=\widetilde{\gamma }_{0}.$
Moreover if $X_{0}$ and $\xi _{0}=\dfrac{p_{0}}{r_{0}}$ are given, $q(x)$
and $\widetilde{\gamma }_{0}\cos \frac{\pi }{2}\xi _{0}-h$ can be
reconstructed by the following formulae:%
\begin{equation*}
\left. 
\begin{array}{c}
q(x)=2\left( \psi ^{\prime }(x)+\psi (0)\right) ,\medskip \\ 
\gamma _{0}\cos \frac{\pi }{2}\xi _{0}-h=\psi (0)%
\end{array}%
\right.
\end{equation*}
\end{theorem}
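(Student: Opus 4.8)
The plan is to specialize the two–stage strategy of Theorem 1. Since the only nonlocal data now sit at the left end (the pair $h,\gamma_0$) while the right end carries the Dirichlet condition $y(1)=0$, there is no analogue of Step 3 of Theorem 1: I only have to recover $q$ together with one combination of $h,\gamma_0$, and then disentangle $h$ and $\gamma_0$. Everything rests on Lemma 3 and on the periodicity of the nodal coefficients. Writing $\xi_0=p_0/r_0$, the factor $\cos\!\big((n+\tfrac12)\pi\xi_0\big)$ is periodic in $n$ of period $2r_0$, hence constant on each residue class $n\equiv a\ (\mathrm{mod}\ 2r_0)$, with value $c_a:=\cos\!\big((a+\tfrac12)\pi\xi_0\big)$. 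For every fixed $x$ one can select $x_n^j\to x$ inside that class, and Lemma 3 then gives the limit
\begin{equation*}
\psi_{a}(x)=\lim_{\substack{n\to\infty\\ n\equiv a}}\Big(n+\tfrac12\Big)^{2}\pi^{2}\Big(x_n^j-\tfrac{j+1/2}{\,n+1/2\,}\Big)=(h-\gamma_0c_a)\,x-h+\gamma_0c_a+\tfrac12\int_0^xq(t)\,dt .
\end{equation*}
The class $a=0$ reproduces the function $\psi=\psi_0$ of the statement, with $c_0=\cos\frac\pi2\xi_0$, and $X_0=\widetilde X_0$ forces $\psi_a=\widetilde\psi_a$ for every $a$.

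\textbf{Step 1 (potential and reconstruction).} I would differentiate $\psi=\psi_0$ and evaluate at the endpoint, obtaining $\psi'(x)=(h-\gamma_0\cos\frac\pi2\xi_0)+\frac12 q(x)$ and $\psi(0)=\gamma_0\cos\frac\pi2\xi_0-h$, so that
\begin{equation*}
q(x)=2\big(\psi'(x)+\psi(0)\big),\qquad \gamma_0\cos\tfrac\pi2\xi_0-h=\psi(0).
\end{equation*}
These are the two reconstruction formulae. From $\psi=\widetilde\psi$ the first gives $q=\widetilde q$ a.e.\ in $(0,1)$, and the second gives $\gamma_0\cos\frac\pi2\xi_0-h=\widetilde\gamma_0\cos\frac\pi2\xi_0-\widetilde h$; note that this alone determines only the combination, not $h$ and $\gamma_0$ individually.

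\textbf{Step 2 (separating $h$ and $\gamma_0$).} Here I would use the remaining residue classes. Evaluating $\psi_a=\widetilde\psi_a$ at $x=0$ yields $\gamma_0c_a-h=\widetilde\gamma_0c_a-\widetilde h$, i.e.\ $(\gamma_0-\widetilde\gamma_0)c_a=h-\widetilde h$, for every $a$. As $n$ increases the argument $(n+\tfrac12)\pi\xi_0$ advances by $\pi\xi_0\notin2\pi\mathbb Z$, so $\cos\!\big((n+\tfrac12)\pi\xi_0\big)$ is non-constant and there exist $a,b$ with $c_a\neq c_b$; subtracting the two identities forces $\gamma_0=\widetilde\gamma_0$ and then $h=\widetilde h$. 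Staying closer to Theorem 1, the same separation can be produced by its Step 2 Wronskian: after Step 1 both eigenfunctions solve the same equation, so multiplying by the opposite eigenfunction, subtracting and integrating over $(\xi_0,x_n^j)$ with $x_n^j\to\xi_0$ gives $\varphi'(\xi_0,\lambda_n)\widetilde\varphi(\xi_0,\widetilde\lambda_n)-\widetilde\varphi'(\xi_0,\widetilde\lambda_n)\varphi(\xi_0,\lambda_n)=(\widetilde\lambda_n-\lambda_n)\int_{\xi_0}^{x_n^j}\widetilde\varphi\,\varphi\,dx=o(1)$, whose left side collapses, via Lemmas 1–2, to $(\gamma_0-\widetilde\gamma_0)\cos\!\big((n+\tfrac12)\pi\xi_0\big)=h-\widetilde h+o(1)$.

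The routine parts are Step 1 and the $o(1)$ bound for the Wronskian integral (bounded spectral gap from Lemma 1 times a shrinking interval). The one point that needs care is the collapse of the left side in the Wronskian route: one must pair the leading $O(k_n)$ term $-k_n\sin k_n\xi_0$ of $\varphi'(\xi_0,\lambda_n)$ with the $O(1/n)$ gap $k_n-\widetilde k_n$ of Lemma 1 and check, using $\sin^2+\cos^2=1$ together with the explicit $\kappa_n$, that the several $O(1)$ contributions cancel down to the single clean relation rather than leaving spurious terms. In the residue-class route the only thing to verify is that the $o(1/n^2)$ remainder in Lemma 3 is uniform enough for $(n+\tfrac12)^2\pi^2\cdot o(1/n^2)\to0$, so each $\psi_a$ is well defined. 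In both routes the decisive role of $\xi_0\in(0,1)$ rational is the existence of at least two distinct values of $\cos\!\big((n+\tfrac12)\pi\xi_0\big)$.
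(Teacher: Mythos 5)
Your proposal is correct, and its Step 1 is exactly what the paper does: the paper's entire written argument for this theorem consists of computing the limit $\psi(x)=\lim_{m\to\infty}(m+\tfrac12)^2\pi^2\big(x_m^j-\tfrac{j+1/2}{m+1/2}\big)$ along $m=2r_0n$ from Lemma 3 and reading off the two reconstruction formulae; the remaining claims $h=\widetilde h$, $\gamma_0=\widetilde\gamma_0$ are left to "methods similar to the proof of Theorem 1," i.e.\ the Wronskian identity over $(\xi_0,x_n^j)$ with $x_n^j\to\xi_0$ combined with Lemmas 1--2. Where you differ is in how you separate $h$ from $\gamma_0$: your primary route extracts, from each residue class $n\equiv a\ (\mathrm{mod}\ 2r_0)$, the constant $\psi_a(0)=\gamma_0\cos\big((a+\tfrac12)\pi\xi_0\big)-h$ and uses the existence of two distinct cosine values to solve the resulting linear system. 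This is a genuine simplification: it stays entirely at the level of the nodal asymptotics of Lemma 3, avoids the delicate cancellation in the Wronskian collapse (which is indeed more awkward here than in Theorem 1, since for $H=\infty$ the eigenfunction of Lemma 2 is $O(1/k_n)$ and carries $h,\gamma_0$ only through $k_n$), and makes explicit the role of the rationality of $\xi_0$. Your observation that the arithmetic progression $(n+\tfrac12)\pi\xi_0$ with step $\pi\xi_0\in(0,\pi)$ cannot have constant cosine is correct and suffices. Two small caveats: first, your residue-class route tacitly needs $X_0$ to contain nodal points with indices $n$ in at least two classes having distinct $c_a$ — but the paper's own notion of "dense subset" is equally loose, and its Wronskian step has the same hidden requirement, so this is a shared imprecision rather than a defect of your argument; second, your displayed Wronskian conclusion $(\gamma_0-\widetilde\gamma_0)\cos\big((n+\tfrac12)\pi\xi_0\big)=h-\widetilde h+o(1)$ is the correct target relation (it is equivalent to $\kappa_n-\widetilde\kappa_n=o(1)$ from Lemma 1 once $Q(1)=\widetilde Q(1)=0$), so either route closes the proof.
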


\begin{example}
Consider the nonlocal BVP%
\begin{equation*}
L:\left\{ 
\begin{array}{c}
\left. \ell y:=-y^{\prime \prime }+q(x)y=\lambda y,\text{ \ \ }x\in
(0,1)\right. \medskip \\ 
\left. U(y):=y^{\prime }(0)+y(0)=\gamma _{0}y(\frac{2}{3}),\right. \medskip
\\ 
\left. V(y):=y(1)=0,\right. \medskip%
\end{array}%
\right.
\end{equation*}%
where $q(x)\in C^{1}\left[ 0,1\right] $ and $\gamma _{0}$ are unknown
coefficients. Let $\ X_{0}=\left\{ x_{n}^{j}\right\} $ be the given subset
of nodal points which satisfy the following asymptotics%
\begin{eqnarray*}
x_{n}^{j} &=&\frac{j+\frac{1}{2}}{n+\frac{1}{2}}+\left[ 1-2\cos \left( \frac{%
2n+1}{3}\pi \right) \right] \frac{j+\frac{1}{2}}{\left( n+\frac{1}{2}\right)
^{3}\pi ^{2}}-\frac{1}{\left( n+\frac{1}{2}\right) ^{2}\pi ^{2}}\medskip \\
&&+\frac{2\cos \left( \frac{2n+1}{3}\pi \right) }{\left( n+\frac{1}{2}%
\right) ^{2}\pi ^{2}}-\frac{\cos \frac{\left( j+\frac{1}{2}\right) \pi }{n+%
\frac{1}{2}}-1}{2\left( n+\frac{1}{2}\right) ^{2}\pi ^{3}}-\frac{j+\frac{1}{2%
}}{\left( n+\frac{1}{2}\right) ^{3}\pi ^{3}}+o\left( \frac{1}{n^{2}}\right)
.\ \medskip
\end{eqnarray*}%
\newline
Let $m:=6n.$ One can calculate that, 
\begin{equation*}
\left. \underset{m\rightarrow \infty }{\lim }\left( m+\frac{1}{2}\right)
^{2}\pi ^{2}\left( x_{m}^{j}-\frac{j}{m}\right) =\psi (x)=\left( 1-2\cos 
\frac{\pi }{3}\right) x-1+2\cos \frac{\pi }{3}-\frac{\cos \pi x-1}{2\pi }-%
\frac{x}{\pi }.\right.
\end{equation*}%
According to Theorem 3, we find 
\begin{eqnarray*}
&&\left. q(x)=2\left( \psi ^{\prime }(x)+\psi (0)\right) =\sin \pi x-\frac{2%
}{\pi },\right. \\
&&\left. \gamma _{0}=2\psi (0)+2=2.\medskip \right. \medskip
\end{eqnarray*}
\end{example}

\begin{acknowledgement}
The authors would like to thank the referees for their valuable comments
which helped to improve the manuscript.
\end{acknowledgement}


\begin{thebibliography}{99}
\bibitem{mc1} McLaughlin J.R., (1988) Inverse spectral theory using nodal
points as data -- a uniqueness result, J. Diff. Eq. 73 354--362.

\bibitem{H} Hald O.H. and McLaughlin J.R., (1989) Solutions of inverse nodal
problems, Inv. Prob. 5 307--347.

\bibitem{yang} Yang X.F., \ (1997) A solution of the nodal problem, Inverse
Problems, 13 203-213.

\bibitem{yangx} Yang X.F., (2001) A new inverse nodal problem, J. Differ.
Eqns. 169 633--653.

\bibitem{but} Buterin S.A. and Shieh C.T., (2009) Inverse nodal problem for
differential pencils, Appl. Math. Lett. 22 1240--1247.

\bibitem{but2} Buterin, S.A, Shieh, C.T., (2012) Incomplete inverse spectral
and nodal problems for differential pencil. Results Math. 62 167-179

\bibitem{ch} Cheng Y.H., Law C-K. and Tsay J., (2000) Remarks on a new
inverse nodal problem, J. Math. Anal. Appl. 248 145--155.

\bibitem{Cu} Currie S. and Watson B.A., (2007) Inverse nodal problems for
Sturm--Liouville equations on graphs, Inv. Probl. 23 2029--2040.

\bibitem{Guo} Guo Y.X. and Wei G.S., (2013) Inverse problems: Dense nodal
subset on an interior subinterval, J. Differential Equations, 255(7)
2002--2017.

\bibitem{Law} Law C.K. and Tsay J., (2001) On the well-posedness of the
inverse nodal problem, Inv. Probl. 17 1493--1512.

\bibitem{law2} Law C.K., Shen, C. L., and Yang C.F., (1999). The inverse
nodal problem on the smoothness of the potential function. Inverse Problems,
15(1) 253.

\bibitem{law3} Law C. K. and Yang C. F., (1998) Reconstructing the potential
function and its derivatives using nodal data, Inverse Problems 14 299--312.

\bibitem{ASO} Ozkan, A. S. and Keskin, B. (2015). Inverse nodal problems for
Sturm--Liouville equation with eigenparameter-dependent boundary and jump
conditions. Inverse Problems in Science and Engineering, 23(8), 1306-1312.

\bibitem{ASO4} Keskin, B. and Ozkan, A. S. (2017). Inverse nodal problems
for Dirac-type integro-differential operators. Journal of Differential
Equations, 263(12), 8838-8847.

\bibitem{AS03} Keskin, B. and Ozkan, A. S. (2017). Inverse nodal problems
for impulsive Sturm-Liouville equation with boundary conditions depending on
the parameter. Advances in Analysis, 2(3), 151-156.

\bibitem{Yur} Shieh and Yurko V. A., (2008) Inverse nodal and inverse
spectral problems for discontinuous boundary value problems, J. Math. Anal.
Appl. 347 266-272.

\bibitem{Shieh} Shen C.L. and Shieh C.T., (2000) An inverse nodal problem
for vectorial Sturm--Liouville equation, Inv. Probl. 16 349--356.

\bibitem{Yang2} Yang C.F and Yang X.P., (2011),Inverse nodal problems for
the Sturm-Liouville equation with polynomially dependent on the
eigenparameter, Inverse Problems in Science and Engineering, 19(7) 951-961.

\bibitem{yang7} Yang C.F, (2013) Inverse nodal problems of discontinuous
Sturm--Liouville operator, J. Differential Equations, 254 1992--2014.

\bibitem{yurko} Yurko V. A., Inverse Spectral Problems for Differential
Operators and Their Applications, Gordon and Breach, Amsterdam, 2000.

\bibitem{bond} Hu Y.T., Bondarenko, N.P., and Yang C.F., (2020) Traces and
inverse nodal problem for Sturm--Liouville operators with frozen argument.
Applied Mathematics Letters, 102, 106096.

\bibitem{bond2} Hu Y.T., Bondarenko, N.P., Shieh, C.T., and Yang C.F. (2019)
Traces and inverse nodal problems for Dirac-type integro-differential
operators on a graph. Applied Mathematics and Computation, 363, 124606.

\bibitem{yur1} Wang Y. P., and Yurko, V. A., (2016) On the inverse nodal
problems for discontinuous Sturm--Liouville operators. Journal of
Differential Equations, 260(5), 4086-4109.

\bibitem{yang8} Yang C.F., (2012) Inverse nodal problems for the
Sturm-Liouville operator with eigenparameter dependent boundary conditions.
Operators and Matrices, 6(1), 63-77.

\bibitem{yur5} Yurko V. A., (2008) Inverse nodal problems for
Sturm--Liouville operators on star-type graphs, J. Inverse Ill-Posed Probl.
16 715--722.

\bibitem{yur3} Freiling G., and Yurko V. A., (2010) Inverse nodal problems
for differential operators on graphs with a cycle. Tamkang Journal of
Mathematics, 41(1), 15-24.

\bibitem{koy5} Koyunbakan H., and Mosazadeh S., (2021) Inverse nodal problem
for discontinuous Sturm--Liouville operator by new Pr\"{u}fer Substitutions.
Mathematical Sciences, 1-8.

\bibitem{koy6} Goktas S., Koyunbakan H., and Gulsen T., (2018) Inverse nodal
problem for polynomial pencil of Sturm-Liouville operator. Mathematical
Methods in the Applied Sciences, 41(17), 7576-7582.

\bibitem{koy7} Y\i lmaz, E., and Koyunbakan, H. (2010) Reconstruction of
potential function and its derivatives for Sturm--Liouville problem with
eigenvalues in boundary condition. Inverse Problems in Science and
Engineering, 18(7), 935-944.

\bibitem{koy8} Akbarpoor S., Koyunbakan H., and Dabbaghian, A., (2019)
Solving inverse nodal problem with spectral parameter in boundary
conditions. Inverse Problems in Science and Engineering, 27(12), 1790-1801.

\bibitem{guo2} Guo Y., and Wei G. (2013). Inverse problems: dense nodal
subset on an interior subinterval. Journal of Differential Equations,
255(7), 2002-2017.

\bibitem{bit} Bitsadze A. V. and Samarskii A. A., (1969) Some elementary
generalizations of linear elliptic boundary value problems, Dokl. Akad. Nauk
SSSR 185 739--740.

\bibitem{stikonas1} \v{S}tikonas A. and \v{S}tikoniene O., (2009)
Characteristic functions for Sturm--Liouville problems with nonlocal
boundary conditions, Math. Model. Anal. 14 229--246.

\bibitem{stikonas2} \c{S}en, E., and \v{S}tikonas, A. (2021). Asymptotic
distribution of eigenvalues and eigenfunctions of a nonlocal boundary value
problem. Mathematical Modelling and Analysis, 26(2), 253-266.

\bibitem{niz} Albeverio S., Hryniv R. O., and Nizhnik, L. P. (2007). Inverse
spectral problems for non-local Sturm--Liouville operators. Inverse
problems, 23(2), 523.

\bibitem{niz2} Nizhnik L. (2010). Inverse nonlocal Sturm--Liouville problem.
Inverse problems, 26(12), 125006.

\bibitem{yang3} Xu X.J., and Yang C.F., (2019) Inverse nodal problem for
nonlocal differential operators. Tamkang Journal of Mathematics 50.3 337-347.

\bibitem{yang4} Hu Y.T., Yang C.F., and Xu X.C., (2017) Inverse nodal
problems for the Sturm--Liouville operator with nonlocal integral
conditions. Journal of Inverse and Ill-Posed Problems 25.6 799-806.

\bibitem{yang5} Qin, X., Gao Y., and Yang C., (2019) Inverse Nodal Problems
for the Sturm-Liouville Operator with Some Nonlocal Integral Conditions.
Journal of Applied Mathematics and Physics 7.01 111.

\bibitem{yan6} Yang C.F., (2010) Inverse nodal problem for a class of
nonlocal Sturm-Liouville operator. Mathematical Modelling and Analysis 15.3
383-392.

\bibitem{yangg} Yang C.F., (2014) An inverse problem for a differential
pencil using nodal points as data. Israel Journal of Mathematics, 204(1),
431-446.
\end{thebibliography}
\end{document}